\documentclass[12pt]{amsart}
\usepackage{amsmath,amsthm,amssymb,bbm,dsfont}
\usepackage{graphicx}
\usepackage{subfig}
\usepackage{cite,float}

\numberwithin{equation}{section}
\newtheorem{theorem}{Theorem}[section]
\newtheorem{lemma}[theorem]{Lemma}

\newtheorem{corollary}[theorem]{Corollary}
\newtheorem{proposition}[theorem]{Proposition}
\theoremstyle{remark}
\newtheorem{remark}[theorem]{Remark}

\makeatletter
\@namedef{subjclassname@2010}{%
	\textup{2010} Mathematics Subject Classification}
\makeatother
\begin{document}
	
	\title{Strongly linearly convex exhaustion of  a class of $\mathbb{C}$-convex domains}
	
	\author[N. Gupta]{Naveen Gupta}
	\address{Department of Mathematics\\
		Netaji Subhas University of Technology\\
		Delhi-110078\\
		India}
	\email{naveen.gupta@nsut.ac.in, ssguptanaveen@gmail.com}

	\begin{abstract}
Let  $D\subseteq \mathbb{C}^n$ be a bounded $\mathbb{C}$-convex domain with 
$C^1$ boundary whose 
outward unit normal admits a modulus of continuity $\omega$ satisfying 
$\lim_{t\to 0^+}\dfrac{\omega(t)}{\sqrt t}$. We prove that $D$ admits an
 increasing exhaustion	by bounded $C^{\infty}$ strongly linearly convex domains. 
 This, in particular, answers a question posed by Azinberg \cite{azin} in affirmative 
 for a class of domains $D$.
	\end{abstract}
	\keywords{$\mathbb{C}$-convex domain, strong linear convexity, Aizenberg approximation problem, 
		Lempert theorem.}
	\subjclass[2020]{32F17, 32F45}
	\maketitle
	
	\section{Introduction}
	
	Edigarian in his very recent paper \cite{armen} proved that every bounded 
	$\mathbb{C}$-convex domain $D\subseteq \mathbb{C}^n,\, n\geq 2$ with boundary of class
	 $C^{1,\alpha}, \, 1/2<\alpha \leq 1$ admits an increasing exhaustion by bounded 
	 $C^{\infty}$ strongly linearly convex domains. The genesis of this problem goes back to 
	 Azinberg \cite{azin}, where he posed the following question: \emph{Does every bounded 
	 $\mathbb{C}$-convex domain admit an increasing exhaustion by smooth strictly 
	 $\mathbb{C}$-convex domains?}
	 Recall that a domain is $\mathbb{C}$-convex if its intersection with every affine 
	 complex line is either non-empty of connected and simply connected.
	 
	 D. Jacquet has studied this question for the domains 
	 with $C^2$
	  boundary \cite{jacquet1} and later for domains with $C^1$ boundary but with some extra 
	  interior ball condition \cite{jacquet2}.
	  
	  Another strong form of this quesiton was considered by Pflug and Zwonek, where they 
	  asked if every bounded $\mathbb{C}$-convex domain can be exhausted by strongly
	   linearly convex domains and answered it positively for 
	   the symmetrized bidisc \cite{zwonek}. This viewpoint is relevant for the invariant 
	   distance theory since this gives another route to the equality of Lempert function.
	   
	   The main result of this article is the following theorem, which strengthens 
	   \cite[~Theorem 1.1]{armen}.
	
	\begin{theorem}\label{thm:main}
		Let $D\subseteq \mathbb{C}^n$ be a bounded $\mathbb{C}$-convex domain with 
		$C^1$-boundary. Assume that the outward unit normal $\nu$ to the boundary 
			$\partial D$ admits a modulus 
		of continuity $\omega$ satisfying 
		$\lim_{t\to 0^+}\dfrac{w(t)}{\sqrt t}=0.$ Then, 
		there are bounded domains $D_j\subseteq \mathbb{C}^n$ such that 
		$$\overline{D_j}\subseteq D_{j+1},\,\, \cup_{j=1}^{\infty}D_j=D,$$ 
		where each $D_j$ has $C^{\infty}$ boundary and is strongly linearly convex.

	\end{theorem}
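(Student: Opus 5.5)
The plan is to follow Edigarian's scheme from \cite{armen} and to check that the only place where the H\"older exponent $\alpha>1/2$ enters is an estimate of size $\omega(t)^2/t$. The hypothesis $\omega(t)=o(\sqrt t)$ is exactly what makes that quantity tend to zero.

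\textbf{Step 1: the linearly convex characterization.} Since $D$ is bounded, $\mathbb{C}$-convex and has $C^1$ boundary, it is linearly convex. For each $p\in\partial D$ the complex tangent hyperplane
\[
H_p=\{z:\langle z-p,\nu(p)\rangle=0\}, \qquad \langle z,w\rangle=\textstyle\sum z_j\bar w_j,
\]
therefore misses $D$. The first quantitative input I will derive from this is a one-sided bound. For $p,q\in\partial D$ with $|p-q|=t$, compare $H_p$ and $H_q$. The point $q$ lies outside $D$ relative to $H_p$ in the sense that the projection of $q-p$ onto $\nu(p)$ is controlled. Concretely, I want
\[
|\langle q-p,\nu(p)\rangle|\le C\,t\,\omega(t).
\]
I would prove it with the mean value theorem along the boundary, writing $\partial D$ locally as a $C^1$ graph whose gradient has modulus of continuity comparable to $\omega$.

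\textbf{Step 2: smoothing.} Let $\rho$ be the signed distance function, which is $C^1$ near $\partial D$ with $\nabla\rho=\nu$. Mollify it at scale $\varepsilon$ to get $\rho_\varepsilon$, and set
\[
D_\varepsilon=\{\rho_\varepsilon+\varepsilon^{a}|z|^2<-\delta(\varepsilon)\}.
\]
Alternatively I would add a small strictly convex term $\eta|z|^2$ to $\rho_\varepsilon$. The parameters $\delta(\varepsilon)$ and $\eta(\varepsilon)$ are chosen so that $\overline{D_\varepsilon}\subset D$ and $D_\varepsilon$ increases to $D$; passing to a subsequence $\varepsilon_j\to0$ gives $\overline{D_j}\subseteq D_{j+1}$. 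Smoothness and the $C^\infty$ boundary follow from Sard's theorem and the fact that $\nabla\rho_\varepsilon\approx\nu\neq0$ near $\partial D$.

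\textbf{Step 3: strong linear convexity, the main obstacle.} Strong linear convexity of $D_\varepsilon$ means that for every $z$ with $\rho_\varepsilon(z)=-\delta$ and every complex tangent vector $X$ (that is, $\partial\rho_\varepsilon(z)X=0$),
\[
\mathcal{L}\rho_\varepsilon(z;X)-\bigl|\textstyle\sum \rho_{\varepsilon,jk}X_jX_k\bigr|>0 .
\]
The second derivatives of $\rho_\varepsilon$ are of size $\omega(\varepsilon)/\varepsilon$. The negative part must therefore be shown to be nearly nonnegative up to an error $o(1)$, which the $\eta|X|^2$ term then absorbs.

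To do this I would use the Taylor expression of $\rho_\varepsilon(z+\lambda X)$ for complex $\lambda$ with $|\lambda|\sim\sqrt\varepsilon$ or $\sim\varepsilon$. Linear convexity of the sublevel sets of $\rho$ (Step 1) shows that $\rho$ restricted to complex tangent lines cannot dip by more than a defect. The defect comes from two sources: $\nu$ turns by at most $\omega(t)$ over distance $t$, and the complex tangent planes of $\rho$ and $\rho_\varepsilon$ differ. Comparing the quadratic form with these increments bounds the deficiency of the degree-$2$ harmonic-minus-Levi part by roughly
\[
\frac{\omega(\varepsilon)^2}{\varepsilon}+o(1).
\]
The hypothesis $\omega(t)^2/t\to0$ makes this $o(1)$, so choosing $\eta(\varepsilon)\to0$ slower than this defect gives strict positivity.

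I expect this estimate to be the hard part: converting the linear convexity of all level sets, a zeroth-order geometric condition, into a second-order bound on the mollified function with error exactly $\omega^2/t$. I would try first to reproduce Edigarian's argument, replacing $t^{2\alpha-1}$ by $\omega(t)^2/t$ throughout.
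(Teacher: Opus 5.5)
Your top-level strategy, running Edigarian's argument with $t^{2\alpha-1}$ replaced by $\omega(t)^2/t$, is what the paper does. However, the concrete steps you sketch contain two genuine gaps.

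\textbf{First gap: regularity of the distance function.} Step~2 rests on the claim that the signed distance is $C^1$ near $\partial D$ with gradient $\nu$. Step~3 then uses this to bound the second derivatives of $\rho_\varepsilon$ by $\omega(\varepsilon)/\varepsilon$. The claim is false for $C^1$, or even $C^{1,\alpha}$ with $\alpha<1$, boundaries, already for convex domains. If $D\subset\mathbb{C}$ lies above the curve $y=|x|^{1+\alpha}$ near $0$, each point $iy$ with $y>0$ small has two nearest boundary points, so $\delta_D$ is not differentiable along a segment ending on $\partial D$. Without an interior ball condition, the gradient of the distance can a priori jump by $O(1)$ at arbitrarily small depth. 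Then neither your second-derivative bound nor even $\nabla\rho_\varepsilon\neq 0$ is justified.

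The paper's substitute is Lemma~\ref{lem:2}. For $\delta/2\le\delta_D\le 5\delta$, the map $u(z)=z-p(z)$ satisfies $|u(z)-u(\zeta)|\le 4|z-\zeta|+25\delta^2\rho(11\delta)^2$, where $\rho(r)=\sup_{0<t\le r}\omega(t)/\sqrt t$ (the paper's $\rho$, not your signed distance). It comes from solving $s\le 2|z-\zeta|+5\delta\rho(11\delta)\sqrt s$ for $s=|z-\zeta|+|u(z)-u(\zeta)|$. This is where the hypothesis on $\omega$ really enters: across a ridge at depth $\delta$, $\nu\circ p$ jumps by only $O(\delta\rho(A\delta)^2)$.

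\textbf{Second gap: Step~3 is not proved, and its proposed input does not survive mollification.} Step~3 announces the key estimate rather than proving it. Knowing that the signed distance has a local minimum at $z$ along the complex line in the direction complex tangent at $z$ is useless after convolution. A vector complex tangent to a level set of $\rho_\varepsilon$ at $z$ is not complex tangent at the points $\zeta$ being averaged. (Your Step~1 bound is a generic property of $C^1$ graphs, not what linear convexity provides.)

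The paper instead uses an inequality valid in every direction. Since $H_{p(z)}$ misses $D$, one has $\delta_D(\xi)\le|\langle\xi-p(z),\nu(p(z))\rangle|$ for all $\xi\in D$. For $h=\delta_D^2$ this gives Proposition~\ref{prop:2.1}: $h(z+\lambda w)+h(z-\lambda w)-2h(z)\le 2|\lambda|^2|\langle w,\nu(p(z))\rangle|^2$. Averaging yields $Lh_\delta(z;w)+|Qh_\delta(z;w)|\le\bigl(|\partial h(\cdot)(w)|^2/h\bigr)*\chi_{\tau_\delta}(z)$. The gap between this average and $|\partial h_\delta(z)(w)|^2/h_\delta(z)$ is a weighted variance of $\partial h(\cdot)(w)/h$ over the mollifying ball, which Lemma~\ref{lem:2} controls. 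With $\tau_\delta=\delta^3$, the error in Proposition~\ref{prop:2.3} is $C\delta^2(\rho(A\delta)^2+2\delta)^2|w|^2=o(\delta^2)|w|^2$. Subtracting a small multiple of $\delta^2(1+|z|^2)$ (Edigarian's Lemma~2.4) beats it.

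The convex correction is tied to the depth for a reason you have not addressed. Adding $\eta|z|^2$ tilts the complex tangent space by about $\eta|z|$, at a cost of order $\eta^2|z|^2/\delta_D$. Your guessed defect $\omega(\varepsilon)^2/\varepsilon$ is the right size for the variance term when the mollification scale is comparable to the depth. So a signed-distance version could plausibly be completed, but only after supplying these two ingredients.
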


The following immediate corollary is an interesting consequence of this theorem.
\begin{corollary}
	Under the assumption of Theorem \ref{thm:main}, the Carath\'eodory distance equals 
	the Lempert function, and the Carath\'eodory-Reiffen metric equals the 
	Kobayashi-Royden metric on $D$.
\end{corollary}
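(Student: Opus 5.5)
The corollary follows from Theorem \ref{thm:main} together with Lempert's theorem and the monotonicity of invariant functions under exhaustions. Lempert's theorem, in its strongly linearly convex form, says the following. On every bounded strongly linearly convex domain $G$ with $C^\infty$ (indeed $C^2$) boundary, the Carath\'eodory distance $c_G$ coincides with the Lempert function $\widetilde{k}_G$. Likewise, the Carath\'eodory--Reiffen metric $\gamma_G$ coincides with the Kobayashi--Royden metric $\kappa_G$. By Theorem \ref{thm:main}, each $D_j$ is such a domain, so $c_{D_j}=\widetilde{k}_{D_j}$ and $\gamma_{D_j}=\kappa_{D_j}$ for every $j$.

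Next I pass to the limit. The inclusions $D_j\subseteq D_{j+1}\subseteq D$ and the holomorphic contractivity of all four objects give, for $z,w\in D_j$ and $X\in\mathbb{C}^n$, the monotone decreasing sequences
$$c_{D_j}(z,w)\downarrow,\quad \widetilde{k}_{D_j}(z,w)\downarrow,\quad \gamma_{D_j}(z;X)\downarrow,\quad \kappa_{D_j}(z;X)\downarrow,$$
each bounded below by the corresponding quantity on $D$. I then invoke the standard continuity of these functions under increasing exhaustions (Jarnicki--Pflug, \emph{Invariant distances and metrics in complex analysis}). It gives $\widetilde{k}_{D_j}\to\widetilde{k}_D$ and $\kappa_{D_j}\to\kappa_D$, and also $c_{D_j}\to c_D$ and $\gamma_{D_j}\to\gamma_D$. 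Chaining these limits yields
$$c_D=\lim_j c_{D_j}=\lim_j\widetilde{k}_{D_j}=\widetilde{k}_D, \qquad \gamma_D=\lim_j\gamma_{D_j}=\lim_j\kappa_{D_j}=\kappa_D,$$
which is the claim.

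The Lempert-function and Kobayashi limits are elementary. An analytic disc $\varphi:\mathbb{D}\to D$ with $\varphi(0)=z$ and $\varphi(\sigma)=w$ can be slightly shrunk to $\varphi_r(\lambda)=\varphi(r\lambda)$ with $r<1$ close to $1$. Then $\varphi_r(\overline{\mathbb{D}})$ is a compact subset of $D$, hence lies in some $D_j$. This shows $\limsup_j\widetilde{k}_{D_j}(z,w)\le\widetilde{k}_D(z,w)$, and the same argument gives the bound for $\kappa$.

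The main obstacle is the Carath\'eodory limit, since a bounded holomorphic function on $D_j$ does not extend to $D$. Here I would use a normal-families argument. For fixed $z,w$, take extremal functions $f_j\in\mathcal{O}(D_j,\mathbb{D})$ with $f_j(z)=0$. Montel's theorem gives a subsequence converging locally uniformly on $D$ to some $f\in\mathcal{O}(D,\overline{\mathbb{D}})$ with $f(z)=0$. By the maximum principle $f$ maps into $\mathbb{D}$, and hence $\lim_j c_{D_j}(z,w)\le c_D(z,w)$. The same argument applied to derivatives gives the statement for $\gamma$.
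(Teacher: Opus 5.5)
Your proposal is correct and follows the argument the paper intends when it calls the corollary immediate; the paper writes out no proof. The argument is to apply Lempert's theorem for bounded $C^\infty$ strongly linearly convex domains to each $D_j$, then pass to the limit using the monotone convergence of $c$, $\widetilde{k}$, $\gamma$, $\kappa$ under the increasing exhaustion $D_j\nearrow D$. Your disc-shrinking argument for $\widetilde{k}$ and $\kappa$ and your Montel plus maximum-principle argument (via a diagonal subsequence) for $c$ and $\gamma$ supply the standard details correctly; the only thing to note is that $\varphi_r$ now attains $w$ at $\sigma/r$, whose Poincar\'e distance from $0$ tends to that of $\sigma$ as $r\to 1$.
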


It is easy to observe that for a bounded $\mathbb{C}$-convex domain whose boundary $C^{1,\alpha},\, 
\alpha >1/2$, its outward unit normal $\nu$ is of class $C^{0,\alpha}$. Thus the function 
$$\omega_{\nu}(t):=\sup\{|\nu(x)-\nu(y)|:|x-y|\leq t\}$$ is a modulus of continuity for 
$\nu$ satifying the hypothesis stated in Theorem \ref{thm:main}. 

Also, a domain satisfying the hypothesis of Theorem \ref{thm:main} is of class 
$C^{1,1/2}$. Therefore, 
our formulation includes domains having boundary of class $C^{1,1/2}$.
	\section{Proof}
	Let us fix some notations first. Let $D$ be as in the hypothesis of Theorem 
	\ref{thm:main}. We set $\delta_{D}(z)=dist(z,\partial D)$ and 
	$h(z)=(\delta_D(z))^2$. Let $R>0$ be large enough so that $\overline{D}\subseteq 
	\mathbb{B}(0,R)$ and we choose $0<c<\dfrac{1}{20(1+R^2)}$. Let $\chi\in C_c^{\infty}
	(\mathbb{C}^n)$ be nonnegative function supported in the unit Euclidean ball and with 
	$\int \chi=1$. For $\tau>0$, we denote $\chi_{\tau}(z)=\dfrac{1}{\tau^{2n}}\chi(z/{\tau}).$
	Then, $\chi_{\tau}$ is supported in the ball $\mathbb{B}(0,\tau)$ and $\int \chi_{\tau}=1.$ 
	For any $\delta>0$, we denote 
	$h_{\delta}=h*\chi_{\tau_{\delta}},$ where $\tau_{\delta}>0$, $\tau_{\delta}$ depends 
	on $\delta$.
	
	For a real valued $C^2$ function $u$ and for $w\in \mathbb{C}^n$, set 
	$$\partial u(z)(w)=\sum_{j=1}^{n}\dfrac{\partial u(z)}{\partial z_j}(w_j)$$ and
	$$L_u(z;w)=\sum_{j,k=1}^{n}\dfrac{\partial^2 u(z)}{\partial z_j \partial 
		\overline{z_k}}(w_j)\overline{w_k},\,
	Q_u(z;w)=\sum_{j,k=1}^{n}\dfrac{\partial^2 u(z)}{\partial z_j \partial 
z_k}(w_j)w_k.$$

For the modulus of continuity $\omega$ of $\nu$(outward unit normal to $\partial D$),	
define 
	$\rho(r):=\sup_{0< t\leq r}\dfrac{\omega(t)}{\sqrt t}$.Then, it is easy to observe 
	that $\rho(r)\to 0$ and that 
	\begin{equation}\label{outward-compare}
	\omega(t)\leq \rho(r)\sqrt t \,\, \mbox{for}\,\, 0<t\leq r. 
	\end{equation}
	
	\begin{proposition}\label{prop:2.1}
		For any $\theta\in \mathbb{R},\, w\in \mathbb{C}^n$ and $t$, at 
		differentiability points $z$ of $h$, we have the following:
		
	$$h(z+te^{i\theta}w)+h(z-te^{i\theta}w)-2h(z)\leq 2t^2\frac{|\partial h(z)(w)|^2}{h(z)}.$$
		\end{proposition}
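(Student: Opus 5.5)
The idea is to dominate $h$ from above by the squared modulus of a complex affine function that touches $h$ at $z$, and then compute the second difference of that comparison function exactly. I take $z\in D$, so $h(z)=\delta_D(z)^2>0$, and I take the points $z\pm te^{i\theta}w$ to lie in $D$; if $h$ is extended by $0$ off $D$, the comparison below only becomes easier.

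\emph{Step 1: comparison function.} Fix $z\in D$ and a nearest boundary point $a\in\partial D$, so $|z-a|=\delta_D(z)$. Since $\partial D$ is $C^1$, the vector $z-a$ is normal to $\partial D$ at $a$, so $\nu(a)=-(z-a)/\delta_D(z)$. A $\mathbb{C}$-convex domain is linearly convex, so through $a$ there is a complex affine hyperplane disjoint from $D$. Because $\partial D$ is $C^1$, this hyperplane must be the complex tangent hyperplane
$$H=\{\zeta:\ \langle \zeta-a,\nu(a)\rangle=0\},$$
where $\langle\cdot,\cdot\rangle$ is the Hermitian inner product. Define the complex affine function $\ell(\zeta)=-\langle \zeta-a,\nu(a)\rangle$ and set $g(\zeta)=|\ell(\zeta)|^2$. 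For $\zeta\in D$, the ball $\mathbb{B}(\zeta,\delta_D(\zeta))$ lies in $D$ and so misses $H$. Hence $\delta_D(\zeta)\le \mathrm{dist}(\zeta,H)=|\ell(\zeta)|$, that is, $h\le g$ on $D$. At $z$ we have $\ell(z)=\delta_D(z)$, so $g(z)=h(z)$.

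\emph{Step 2: gradients agree.} The function $h-g$ attains a local maximum at $z$. If $h$ is differentiable at $z$, its differential must equal that of $g$. Writing $\ell(\zeta)=\ell(z)+\lambda(\zeta-z)$ with $\lambda(v)=-\langle v,\nu(a)\rangle$ complex linear, we get $\partial g(z)(w)=\overline{\ell(z)}\,\lambda(w)=\delta_D(z)\lambda(w)$. Therefore
$$\frac{|\partial h(z)(w)|^2}{h(z)}=\frac{\delta_D(z)^2|\lambda(w)|^2}{\delta_D(z)^2}=|\lambda(w)|^2 .$$

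\emph{Step 3: exact second difference.} Put $s=te^{i\theta}$. Then
$$|\ell(z)+s\lambda(w)|^2+|\ell(z)-s\lambda(w)|^2-2|\ell(z)|^2=2|s|^2|\lambda(w)|^2=2t^2|\lambda(w)|^2,$$
because the cross terms cancel. Combining this with $h\le g$ and $h(z)=g(z)$ gives
$$h(z+sw)+h(z-sw)-2h(z)\le 2t^2|\lambda(w)|^2=2t^2\frac{|\partial h(z)(w)|^2}{h(z)} .$$

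\emph{Main obstacle.} The only nontrivial input is Step 1: that a bounded $\mathbb{C}$-convex domain with $C^1$ boundary omits its complex tangent hyperplane at every boundary point. I would cite the standard fact that $\mathbb{C}$-convexity implies linear convexity (Andersson--Passare--Sigurdsson). I would then argue that any complex hyperplane through $a$ missing $D$ lies in the real tangent space $T_a\partial D$, since otherwise it would enter $D$ transversally, and so it is the complex tangent hyperplane. A secondary point is the domain on which $h$ is defined. The bound $h\le g$ is proved only on $D$, so the inequality is asserted for points $z\pm te^{i\theta}w$ in $D$, or with $h$ extended by zero outside $D$.
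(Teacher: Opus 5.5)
Your proof is correct. The paper states Proposition~\ref{prop:2.1} without proof; it records only the gradient formula in the Remark and otherwise relies on Edigarian's paper. Your route is the natural proof that this statement presupposes: dominate $h$ on $D$ by $|\ell|^2$, where $\ell$ is a complex affine function vanishing on a supporting complex hyperplane through the nearest boundary point; match differentials at the touching point $z$; then apply the parallelogram identity to $|\ell|^2$. Your Step~2 also reproduces the Remark's formula $\partial h(z)(w)=-\delta_D(z)\langle w,\nu(p(z))\rangle$.

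\textbf{The restriction to points of $D$ is necessary.} You assume $z\in D$ and $z\pm te^{i\theta}w\in D$, or that $h$ is extended by $0$ off $D$. This is genuinely needed, not a convenience. With $\delta_D=\mathrm{dist}(\cdot,\partial D)$ on all of $\mathbb{C}^n$ and arbitrary $t$, the statement as printed fails. Choose $w\neq 0$ with $\langle w,\nu(p(z))\rangle=0$, so the right-hand side is $0$. Then let $t\to\infty$: the points $z\pm te^{i\theta}w$ leave the bounded set $D$, and the left-hand side becomes positive. The paper only uses the inequality for small $|t|$ with all points inside $D$, as in the proof of Proposition~\ref{prop:2.3}.

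\textbf{Your stated main obstacle can be bypassed.} You never need to identify the supporting hyperplane with the complex tangent hyperplane. Let $H'$ be any complex affine hyperplane through $a$ with $H'\cap D=\emptyset$, and let $\ell'$ be the complex affine function with $|\ell'(\zeta)|=\mathrm{dist}(\zeta,H')$. Then $h\le|\ell'|^2$ on $D$. Moreover $|\ell'(z)|=\delta_D(z)$: the ball argument gives $\mathrm{dist}(z,H')\ge\delta_D(z)$, while $a\in H'$ gives $\mathrm{dist}(z,H')\le|z-a|=\delta_D(z)$. Steps~2 and~3 then go through unchanged, since $|\overline{\ell'(z)}\lambda'(w)|^2/|\ell'(z)|^2=|\lambda'(w)|^2$ whatever the phase of $\ell'(z)$. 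So weak linear convexity alone, which your citation already supplies, gives the inequality. The $C^1$ hypothesis enters only to express the gradient through $\nu(p(z))$.
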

	
	\begin{remark}
		At differentiability points $z$ of $h$, if $p(z)$ is the nearest boundary point, then 
		$$z=p(z)-\delta_{D}(z)\nu(p(z))\,\, \mbox{and}$$
		\begin{align*}
			\partial h(z)(w)&=<w,z-p(z)>\\
			&=-\delta_{D}(z)<w,\nu(p(z))>.
		\end{align*}
\end{remark}

\begin{lemma}\label{lem:2}
Let $z, \zeta$ be points of differntiability of $h$ satisfying $\dfrac{\delta}{2}\leq 
\delta_{D}(z),\delta_{D}(\zeta)\leq  5\delta,\, |z-\zeta|\leq \dfrac{\delta}{4}$.
 Then for all sufficiently small $\delta>0$, there	there exists constants $A,C>0$ such that 
 $$|\partial h(z)(w)-\partial h(\zeta)(w)|\leq C\left(|z-\zeta|+\delta^2\rho(A\delta)^2
 \right)|w|$$
 for every $w\in \mathbb{C}^n$.
\end{lemma}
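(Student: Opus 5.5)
Using the Remark, write $p=p(z)$, $q=p(\zeta)$ for the nearest boundary points. Then
$$\partial h(z)(w)-\partial h(\zeta)(w)=\langle w,(z-p)-(\zeta-q)\rangle .$$
The plan is to bound $|(z-p)-(\zeta-q)|$ directly. We write
$$z-p=-\delta_D(z)\nu(p),\qquad \zeta-q=-\delta_D(\zeta)\nu(q),$$
and split
$$\delta_D(z)\nu(p)-\delta_D(\zeta)\nu(q)=\bigl(\delta_D(z)-\delta_D(\zeta)\bigr)\nu(p)+\delta_D(\zeta)\bigl(\nu(p)-\nu(q)\bigr).$$
Since $\delta_D$ is $1$-Lipschitz, the first term is at most $|z-\zeta|$. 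In the second term $\delta_D(\zeta)\le 5\delta$, so it remains to show
$$|\nu(p)-\nu(q)|\le C'\bigl(|z-\zeta|/\delta+\delta\rho(A\delta)^2\bigr).$$
Multiplying by $5\delta$ then gives the claimed bound $C(|z-\zeta|+\delta^2\rho(A\delta)^2)$.

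The first step is a crude a priori bound on $|p-q|$. We have $|p-q|\le |p-z|+|z-\zeta|+|\zeta-q|\le 5\delta+\delta/4+5\delta$, so $|p-q|\le A\delta$ with, say, $A=11$. By \eqref{outward-compare} this gives $|\nu(p)-\nu(q)|\le\omega(|p-q|)\le\rho(A\delta)\sqrt{|p-q|}$ once $A\delta$ is small. This alone is too weak, since it yields only $\delta^{3/2}\rho$. So the plan is to bootstrap: first obtain a better estimate on $|p-q|$ that exploits the small-ball (``nearest point'') geometry, then feed it back into the modulus of continuity.

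The key step, and the main obstacle, is the following estimate. Consider the segment from $z$ toward $p$, and also the point $p-\delta_D(z)\nu(q)$; compare with the ball of radius $\delta_D(\zeta)$ about $\zeta$, which lies in $D$ and touches $\partial D$ at $q$. Concretely, since $p\in\partial D$ lies outside the open ball $\mathbb B(\zeta,\delta_D(\zeta))$, we have $|p-\zeta|\ge\delta_D(\zeta)$, and symmetrically $|q-z|\ge\delta_D(z)$. Expanding
$$|p-\zeta|^2=\bigl|(p-q)+\delta_D(\zeta)\nu(q)\bigr|^2$$
gives
$$|p-q|^2+2\delta_D(\zeta)\,\mathrm{Re}\langle p-q,\nu(q)\rangle\ge 0,$$
and similarly
$$|p-q|^2-2\delta_D(z)\,\mathrm{Re}\langle p-q,\nu(p)\rangle\ge 0 .$$
On the other hand, the $C^1$ boundary with normal modulus $\omega$ gives a local graph estimate
$$\bigl|\mathrm{Re}\langle p-q,\nu(q)\rangle\bigr|\le |p-q|\,\omega(|p-q|)$$
(mean value along the boundary graph). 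I would combine these with the exact relation $p-q=(z-\zeta)+\delta_D(z)\nu(p)-\delta_D(\zeta)\nu(q)$, taking the inner product with $\nu(p)-\nu(q)$. Using $|\nu(p)-\nu(q)|^2=2-2\mathrm{Re}\langle\nu(p),\nu(q)\rangle$, this should yield
$$\delta\,|\nu(p)-\nu(q)|^2\lesssim |z-\zeta|\,|\nu(p)-\nu(q)|+|p-q|\,\omega(|p-q|).$$
Inserting $\omega(|p-q|)\le\rho(A\delta)\sqrt{|p-q|}$ and $|p-q|\lesssim|z-\zeta|+\delta|\nu(p)-\nu(q)|$ (from the same exact relation) gives a quadratic inequality in $x=|\nu(p)-\nu(q)|$. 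Its solution should be $x\lesssim |z-\zeta|/\delta+\delta\rho(A\delta)^2$ after using Young's inequality, e.g.
$$\rho\,(\delta x)^{3/2}/\delta\le \tfrac12 x^2\delta+C\rho^4\delta^2$$
and similar. Getting exactly the exponent pattern $\delta\rho^2$ here is where I expect the careful bookkeeping to be needed; the smallness of $\delta$ is used to absorb terms and to make $\rho(A\delta)\le1$.
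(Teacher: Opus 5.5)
Your proposal is correct, but it takes a longer route than the paper. The paper uses nothing beyond $z=p(z)-\delta_D(z)\nu(p(z))$. Set $u(z)=z-p(z)$ and $s:=|z-\zeta|+|u(z)-u(\zeta)|\le 11\delta$; then $|p-q|\le s$. The splitting you wrote, together with monotonicity of $\omega$, gives $|u(z)-u(\zeta)|\le|z-\zeta|+5\delta\,\omega(s)\le|z-\zeta|+5\delta\rho(11\delta)\sqrt{s}$. Hence $s\le 2|z-\zeta|+5\delta\rho(11\delta)\sqrt{s}$, and $XY\le\frac12X^2+\frac12Y^2$ yields $s\le4|z-\zeta|+25\delta^2\rho(11\delta)^2$. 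In other words, $|\nu(p)-\nu(q)|\le\rho\sqrt{|p-q|}$ is too weak only when paired with the crude bound $|p-q|\le A\delta$. Paired with the relation $|p-q|\le2|z-\zeta|+5\delta|\nu(p)-\nu(q)|$, which you already wrote down, it closes at once and gives $|\nu(p)-\nu(q)|\le\rho\sqrt{2|z-\zeta|}+5\delta\rho^2$.

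Your detour does check out. It uses the identity $\mathrm{Re}\langle p-q,\nu(p)-\nu(q)\rangle=\mathrm{Re}\langle z-\zeta,\nu(p)-\nu(q)\rangle+\frac12(\delta_D(z)+\delta_D(\zeta))|\nu(p)-\nu(q)|^2$, combined with the two-sided graph estimates at $p$ and at $q$, and it leads to $|\nu(p)-\nu(q)|\lesssim|z-\zeta|/\delta+\delta\rho^2$. But it gains nothing for this lemma:
\begin{itemize}
\item it needs a uniform local graph representation of $\partial D$ (really $|\mathrm{Re}\langle p-q,\nu(q)\rangle|\le C|p-q|\,\omega(C|p-q|)$, with $C$ absorbed into $A$);
\item the ball inequalities $|p-\zeta|\ge\delta_D(\zeta)$ and $|q-z|\ge\delta_D(z)$ are never actually used;
\item its conclusion is already implied by the bound from the short argument, since $\rho\sqrt{|z-\zeta|}\le\frac12(|z-\zeta|/\delta+\delta\rho^2)$.
\end{itemize}

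If you keep your route, two bookkeeping fixes are needed. Your displayed Young inequality mixes normalizations: it should read $\rho(\delta x)^{3/2}\le\frac12\delta x^2+C\rho^4\delta^3$, or equivalently $\rho\,\delta^{1/2}x^{3/2}\le\frac12x^2+C\rho^4\delta^2$. As written it fails for small $\delta$. Also, the cross term $\rho|z-\zeta|^{3/2}/\delta$ needs Young with exponents $4/3$ and $4$ to be bounded by $(|z-\zeta|/\delta)^2+\rho^4\delta^2$.
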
	

\begin{proof}
	Take $u(z)=z-p(z)=-\delta_{D}(z)\nu(p(z))$, then using the fact that $\nu$ is 
	the unit outward normal it is easy to obtain
\begin{eqnarray*}
		|u(z)-u(\zeta)|\leq|\delta_{D}(z)-\delta_{D}(\zeta)|+\delta_{D}(\zeta)
		\omega(|p(z)-p(\zeta)|) \\
		\leq |z-\zeta|+5\delta \omega(|z-\zeta|+|u(z)-u(\zeta)|).
	\end{eqnarray*}
We have used used the fact that $\omega$ is an increasing function in the last step.       

Observe that $s:=|z-\zeta|+|u(z)-u(\zeta)|\leq |z-\zeta|+10\delta $ , therefore we get, 
$s\leq 11\delta$. Also, using the inequality \ref{outward-compare} for 
$r=11\delta$, we get $\omega(s)\leq 
\rho(11\delta) \sqrt{\delta}.$    This further yields $s\leq 2|z-\zeta|+5\delta \rho(11\delta) 
\sqrt{s}.$ Using the identity $XY\leq \frac 1 2 X^2+\frac 1 2 Y^2$ for $X=\sqrt s$ and 
$Y=5\delta \rho(11\delta)$, the inequality is futher simplified as 
$b\leq 4|z-\zeta|+25\delta^2\rho(11\delta)^2.$ 

Consider 
\begin{align*}
|\partial h(z)(w)-\partial h(\zeta)(w)|&=|<w,u(z)>-<w,u(\zeta)>|\\
&=|<w,u(z)-u(\zeta)>|\\
&\leq |u(z)-u(\zeta)||w|\\
&\leq \left(4|z-\zeta|+25\delta^2\rho(11\delta)^2\right)|w|\\
&\leq C\left(|z-\zeta|+\delta^2\rho(A\delta)^2\right)|w|,                                       
\end{align*}
where $A=11,\, C=25$.
\end{proof}
	
\begin{proposition}\label{prop:2.3}
	Let $h_{\delta}=h*\chi_{\tau_{\delta}}$, where $0<2\tau_{\delta}\leq\dfrac{\delta}{4}.$ 
	Let $U_{\delta}=\{z:\delta<\delta_D(z)<4\delta\}$. Then, on $U_{\delta},$
	$$\frac{|\partial h_{\delta}(z)(w)|^2}{h_{\delta}(z)}-Lh_{\delta}(z;w)-|Qh_{\delta}
(z;w)|\geq -C\delta^2\left(\rho(A\delta)^2+\frac{2\tau_{\delta}}{\delta^2}\right)^2|w|^2.$$
	Furthermore, $|h_{\delta}(z)-h(z)|\leq C\delta\tau_{\delta}$. 

\end{proposition}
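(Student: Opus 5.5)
The plan is to average the pointwise inequality of Proposition \ref{prop:2.1} against $\chi_{\tau_\delta}$. Then I compare the averaged right-hand side with $|\partial h_\delta(z)(w)|^2/h_\delta(z)$, using a Lagrange-type identity rather than a crude first-order estimate. A first-order estimate would only give an error of size $(\tau_\delta+\delta^2\rho^2)/\delta$, which is not squared.

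\textbf{Localisation and the easy estimate.} Fix $z\in U_\delta$ and write $\tau=\tau_\delta$. For $|y|\le \tau\le \delta/8$, the point $\zeta=z-y$ satisfies $\delta/2\le \delta_D(\zeta)\le 5\delta$. The same holds for any two such points $\zeta,\zeta'$, and $|\zeta-\zeta'|\le 2\tau\le \delta/4$, so Lemma \ref{lem:2} and its proof apply to them. Since $\delta_D$ is $1$-Lipschitz, we have $|h(\zeta)-h(z)|\le (\delta_D(\zeta)+\delta_D(z))|\zeta-z|\le 10\delta\tau$. Integrating against $\chi_\tau$ gives $|h_\delta(z)-h(z)|\le C\delta\tau$, which is the last assertion. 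Also $h$ is Lipschitz, hence differentiable a.e., so all integrals below may be taken over differentiability points.

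\textbf{Averaging Proposition \ref{prop:2.1}.} Apply Proposition \ref{prop:2.1} at $\zeta=z-y$ and integrate in $y$ against $\chi_\tau$. This gives
$$h_\delta(z+te^{i\theta}w)+h_\delta(z-te^{i\theta}w)-2h_\delta(z)\le 2t^2\int \frac{|\partial h(z-y)(w)|^2}{h(z-y)}\chi_\tau(y)\,dy.$$
Now divide by $t^2$ and let $t\to0$; this is allowed since $h_\delta$ is smooth. The left side tends to the second derivative $2L_{h_\delta}(z;w)+2\,\mathrm{Re}\,(e^{2i\theta}Q_{h_\delta}(z;w))$. Choosing $\theta$ so that the real part equals $|Q_{h_\delta}(z;w)|$ yields
$$L_{h_\delta}(z;w)+|Q_{h_\delta}(z;w)|\le \int |v(\zeta)|^2\chi_\tau,$$
where by the Remark $v(\zeta)=\langle w,\nu(p(\zeta))\rangle$. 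Indeed $\partial h(\zeta)(w)=-\delta_D(\zeta)v(\zeta)$, so $|\partial h(\zeta)(w)|^2/h(\zeta)=|v(\zeta)|^2$.

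\textbf{The main step: comparing the two quotients.} Writing $d=\delta_D$ and $\mu=\chi_\tau(z-\cdot)\,d\zeta$, we have
$$h_\delta(z)=\int d^2\,d\mu,\qquad \partial h_\delta(z)(w)=-\int d\,v\,d\mu.$$
The Lagrange identity gives
$$\int|v|^2d\mu\int d^2d\mu-\Bigl|\int dv\,d\mu\Bigr|^2=\tfrac12\iint|d(\zeta)v(\zeta')-d(\zeta')v(\zeta)|^2\,d\mu\,d\mu.$$
Dividing by $h_\delta(z)\ge c\delta^2$ shows that the quantity we must bound, namely
$$\int|v|^2d\mu-\frac{|\partial h_\delta(z)(w)|^2}{h_\delta(z)},$$
is at most
$$C\delta^{-2}\sup_{\zeta,\zeta'}\Bigl(|d(\zeta)-d(\zeta')|^2|w|^2+\delta^2|v(\zeta)-v(\zeta')|^2\Bigr).$$
The first term is $\le C(\tau/\delta)^2|w|^2$.

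For the second term, I write $\nu(p(\zeta))=-u(\zeta)/d(\zeta)$ with $u(\zeta)=\zeta-p(\zeta)$. This gives
$$\delta\,|\nu(p(\zeta))-\nu(p(\zeta'))|\le C\bigl(|u(\zeta)-u(\zeta')|+|d(\zeta)-d(\zeta')|\bigr).$$
The estimate established in the proof of Lemma \ref{lem:2} then bounds this by $C(\tau+\delta^2\rho(A\delta)^2)$. Hence
$$|v(\zeta)-v(\zeta')|^2\le C\bigl(\tau/\delta+\delta\rho(A\delta)^2\bigr)^2|w|^2=C\delta^2\bigl(\rho(A\delta)^2+\tau/\delta^2\bigr)^2|w|^2.$$
The $(\tau/\delta)^2$ term is absorbed into this as well. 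Combining with the averaged inequality gives the stated lower bound, with the constant adjusted to accommodate the factor $2\tau_\delta$.

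I expect this comparison step to be the main obstacle. Getting the squared error requires the variance (Lagrange identity) structure together with the Hölder-type control of $\nu\circ p$ coming from Lemma \ref{lem:2}.
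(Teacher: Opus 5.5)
Your proof is correct and follows the paper's route. You average Proposition \ref{prop:2.1} against $\chi_{\tau_\delta}$, then bound the gap between $\int |\partial h(\cdot)(w)|^2/h\,\chi_{\tau_\delta}$ and $|\partial h_\delta(z)(w)|^2/h_\delta(z)$ by a variance identity whose pairwise differences are controlled through Lemma \ref{lem:2}. Your Lagrange identity is the symmetrized form of the paper's weighted-variance identity $\int h\,|F-\int F\,d\mu_z|^2\chi_{\tau_\delta}(z-\cdot)$ with $F=\partial h(\cdot)(w)/h$, since $h(\zeta)h(\eta)|F(\zeta)-F(\eta)|^2=|\delta_D(\eta)v(\zeta)-\delta_D(\zeta)v(\eta)|^2$; you also prove the bound $|h_\delta-h|\le C\delta\tau_\delta$, which the paper states without proof.
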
	
\begin{proof}
Let $z\in U_{\delta},\, \zeta, \eta \in B(z,\tau_{\delta})$ and $\theta\in \mathbb{R}$. 
Since $\delta_D$ is $1$-Lipschitz, it follows that $\delta_D(\zeta)\geq \delta-
\tau_{\delta}$. So, there is some $t_0>0$ depending only on $z,w,\delta$ such that 
$\zeta\pm te^{i\theta}w\in D$ for every $\zeta\in B(z,\tau_{\delta}).,$ for all $|t|<t_0$. 
Using Proposition \ref{prop:2.1}, at almost every $\zeta\in B(z,\tau_{\delta}),$ and 
averaging over $B(z,\tau_{\delta})$, we get 
$$h(z+te^{i\theta}w)+h(z-te^{i\theta}w)-2h(z)\leq 2t^2\left(\frac{|\partial h(w)|^2}{h}*
\chi_{\tau_{\delta}}\right)(z).$$
Multiplying with $\chi_{\tau_{\delta}}(z-\zeta)$ and integrating, we get 
$$h_{\delta}(z+te^{i\theta}w)+h_{\delta}(z-te^{i\theta}w)-2h_{\delta}(z)\leq 
2t^2\left(\frac{|\partial h(w)|^2}{h}*\chi_{\tau_{\delta}}\right)(z).$$
Dividing by $2t^2$ and taking limit as $t\to 0$ (upon using smoothness of $h_{\delta}$), we get
$$Lh_{\delta}(z;w)+Re\left(e^{2i\theta}Qh_{\delta}(z;w)\right)\leq
\left(\frac{|\partial h(w)|^2}{h}*\chi_{\tau_{\delta}}\right)(z).$$
Taking sup over $\theta$, we get

\begin{equation}\label{prop:2.4_eq1}
Lh_{\delta}(z;w)+|Qh_{\delta}(z;w)|\leq
\left(\frac{|\partial h(w)|^2}{h}*\chi_{\tau_{\delta}}\right)(z).
\end{equation}
Let us fix $F(\zeta):=\dfrac{\partial h(\zeta)(w)}{h(\zeta)}$ and consider 
$$F(\zeta)-F(\eta)=\dfrac{\partial h(\zeta)(w)}{h(\zeta)}-\dfrac{\partial h(\eta)(w)}
{h(\eta)}
+\dfrac{\partial h(\eta)(w)}{h(\zeta)}-\dfrac{\partial h(\eta)(w)}{h(\zeta)}.$$
Therefore, 
$$|F(\zeta)-F(\eta)|\leq\dfrac{|\partial h(\zeta)(w)-\partial h(\eta)(w)|}{h(\zeta)}-
|\partial h(\eta)(w)|\dfrac{|h(\zeta)-h(\eta)|}{h(\zeta)h(\eta)}.$$
Using Lemma \ref{lem:2} and the fact that $\partial h(\eta)(w)=-\partial_D(\eta)
<w,\nu(p(\eta))>$, the previous inequlaity gives
\begin{align}\label{prp:2.4_eq2}
|F(\zeta)-F(\eta)|&\leq\dfrac{C\left(2\tau_{\delta}+\delta^2\rho (A\delta)^2\right)|w|}
{\delta^2/4}+\dfrac{5\delta|w|10\delta|\zeta-\eta|}{\left(\delta^2/4\right)^2}\\
&\leq C_1\left(\rho(A\delta)^2+\dfrac{2\tau_{\delta}}{\delta^2}\right)|w|,
\end{align}
for some constant $C_1$. 
Consider the identity 
\begin{align*}
\dfrac{|\partial h(\zeta)(w)|^2}{h(\zeta)}\chi_{\tau_{\delta}}(z-\zeta)-
\dfrac{|\partial h_{\delta}(z)(w)|^2}{h_{\delta}(z)}&=\int h(\zeta)
\left|F(\zeta)-\dfrac{|\partial h_{\delta}(z)(w)|^2}{h_{\delta}(z)}\right|^2
\chi_{\tau_{\delta}}(z-\zeta)d\zeta\\
&=\int h(\zeta)\left|\int (F(\zeta)-F(\eta))d\mu_z(\eta) \right|
\chi_{\tau_{\delta}}(z-\zeta)d\zeta,
\end{align*}
where $d\mu_z(\eta)=\dfrac{h(\eta)\chi_{\tau_{\delta}}(z-\eta)}{h_{\delta}(z)}d\eta$. The 
proposition now follows by using Equation (\ref{prp:2.4_eq2}) alongwith the previous 
equation and then using Equation (\ref{prop:2.4_eq1}).
\end{proof}

\begin{proposition}\label{prop:2.5}
There is a constant $c_*>0$ such that for all sufficiently small $\delta>0$, every 
$z\in U_{\delta}$ and every $w\in \mathbb{C}^n$ satisty the inequality
$$\frac{|\partial \tilde{h}_{\delta}(z)(w)|^2}{\tilde{h}_{\delta}(z)}-
L\tilde{h}_{\delta}(z;w)-|Q\tilde{h}_{\delta}
(z;w)|\geq c_*\delta^2,$$	
where $\tilde{h}_{\delta}=h_{\delta}-C\delta^2(1+|z|^2).$
\end{proposition}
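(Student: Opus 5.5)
The plan is to treat $\tilde h_\delta = h_\delta - g$ with $g(z)=c\,\delta^2(1+|z|^2)$ as a small, strictly plurisubharmonic perturbation of $h_\delta$. I read the constant in the definition of $\tilde h_\delta$ as the constant $c<\frac{1}{20(1+R^2)}$ fixed at the start of the section, and the conclusion as $c_*\delta^2|w|^2$; by homogeneity in $w$ the inequality must carry the factor $|w|^2$. I then apply Proposition \ref{prop:2.3} to $h_\delta$ and check that the perturbation gains a definite multiple of $\delta^2|w|^2$ while the error from Proposition \ref{prop:2.3} is $o(\delta^2)|w|^2$.

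\textbf{Step 1 (derivatives of $g$).} A direct computation gives
\[
L_g(z;w)=c\delta^2|w|^2,\qquad Q_g(z;w)=0,\qquad \partial g(z)(w)=c\delta^2\langle w,z\rangle=:\alpha,
\]
so $|\alpha|\le c\delta^2|z||w|$. Hence $L\tilde h_\delta=Lh_\delta-c\delta^2|w|^2$ and $Q\tilde h_\delta=Qh_\delta$. The Levi term therefore gains exactly $c\delta^2|w|^2$, and the $Q$ term is unchanged.

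\textbf{Step 2 (the gradient quotient).} Put $a=\partial h_\delta(z)(w)$, $H=h_\delta(z)$ and $G=g(z)$. On $U_\delta$ we have $h\ge\delta^2$. Proposition \ref{prop:2.3} gives $|h_\delta-h|\le C\delta\tau_\delta$, so for small $\delta$ we get $H\ge\delta^2/2$. Since $G\le c\delta^2(1+R^2)<\delta^2/20$, this gives $H-G>0$, so $\tilde h_\delta>0$ on $U_\delta$. I then use the elementary inequality
\[
\frac{|x+y|^2}{s+t}\le\frac{|x|^2}{s}+\frac{|y|^2}{t}\qquad (s,t>0),
\]
which is Cauchy--Schwarz, with $x=a-\alpha$, $s=H-G$, $y=\alpha$, $t=G$. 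This yields
\[
\frac{|a-\alpha|^2}{H-G}\ge\frac{|a|^2}{H}-\frac{|\alpha|^2}{G}\ge\frac{|a|^2}{H}-c\delta^2\frac{|z|^2}{1+|z|^2}|w|^2\ge\frac{|a|^2}{H}-c\delta^2\frac{R^2}{1+R^2}|w|^2 .
\]
This step is where I expect the main difficulty. A naive first-order expansion produces a cross term of size $|a||\alpha|/H\sim\delta|w|^2$, which would swamp $\delta^2$. The Cauchy--Schwarz form avoids this, and it is exactly where the weight $1+|z|^2$ in $g$ is needed: it makes the loss strictly smaller than the Levi gain.

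\textbf{Step 3 (combine).} Adding Steps 1 and 2 to Proposition \ref{prop:2.3} gives
\[
\frac{|\partial\tilde h_\delta(w)|^2}{\tilde h_\delta}-L\tilde h_\delta-|Q\tilde h_\delta|
\ge \Big(\frac{c}{1+R^2}-C\big(\rho(A\delta)^2+\tfrac{2\tau_\delta}{\delta^2}\big)^2\Big)\delta^2|w|^2 .
\]
Now choose $\tau_\delta$ with $2\tau_\delta\le\min(\delta/4,\delta^3)$. Then $2\tau_\delta/\delta^2\le\delta\to0$ and $\rho(A\delta)\to0$, so for all sufficiently small $\delta$ the bracket is at least $c_*:=\frac{c}{2(1+R^2)}$. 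This gives the claim. The only care needed is that the constants $C,A$ from Lemma \ref{lem:2} and Proposition \ref{prop:2.3} are independent of $\delta$, which they are.
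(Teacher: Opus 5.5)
Your proof is correct and follows the paper's argument. Like the paper, you combine Proposition \ref{prop:2.3} (with $\tau_\delta\approx\delta^3$) with the perturbation estimate that subtracting $\epsilon(1+|z|^2)$, $\epsilon$ a small multiple of $\delta^2$, gains $\epsilon|w|^2/(1+R^2)$, and then absorb the $o(\delta^2)|w|^2$ error using $\rho(A\delta)\to 0$. The only difference is that the paper imports the perturbation estimate as Lemma 2.4 of Edigarian's paper, while your Cauchy--Schwarz computation in Step 2 proves it directly; your readings of the constant as the small $c$ and of the right-hand side as $c_*\delta^2|w|^2$ match what the paper's proof actually uses.
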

\begin{proof} It is easy to verify that $\tilde{h}_{\delta}>0$. Applying 
	\cite[~Lemma 2.4]{armen} for $\epsilon =C\delta^2$ and $g=h_{\delta}$ and then 
	using Proposition \ref{prop:2.3} for $\tau_{\delta}=\delta^3$, we get
	\begin{align*}\label{prop:2.5_eq1}
	\frac{|\partial \tilde{h}_{\delta}(z)(w)|^2}{\tilde{h}_{\delta}(z)}-
	L\tilde{h}_{\delta}(z;w)-|Q\tilde{h}_{\delta}
	(z;w)|&\geq -C\delta^2\left(\rho(A\delta)^2+2\delta\right)^2
	|w|^2+\dfrac{C\delta^2}{1+R^2}|w|^2.\\
	&=\delta^2|w|^2\left(\dfrac{C}{1+R^2}-C\left(\rho(A\delta)^2+2\delta\right)^2\right).
	\end{align*}
	Since $C\left(\rho(A\delta)^2+2\delta\right)^2\to 0$, therefore for sufficiently small
	 $\delta>0$, we have 
	 $$C\left(\rho(A\delta)^2+2\delta\right)^2\leq \dfrac{C}{1+R^2}.$$ 
	 Thus the result holds for $c^*=\dfrac{C}{2(1+R^2)}.$
\end{proof}

Now that all the supporting Lemmas and Propositions have been taken care of, the 
proof of the main Theorem \ref{thm:main} proceeds as in the proof of 
\cite[~Theorem 1.1]{armen} by taking $\tau_{\delta}=\delta^3$. In particular, the 
same construction for $D_j$ as $G_{\delta_j}$ yields the increasing exhaustion 
by bounded $C^{\infty}$ stronly linearly convex domains of $D.$


\end{document}